\declaretheorem[name=Lemma, numberwithin = section]{lemma}
\declaretheorem[name=Theorem,sibling = lemma]{theorem}
\crefname{claim}{Claim}{Claims}
\crefname{lemma}{Lemma}{Lemmas}
\crefname{theorem}{Theorem}{Theorems}
\crefname{proposition}{Proposition}{Propositions}
\crefname{question}{Question}{Questions}
\crefname{conjecture}{Conjecture}{Conjectures}
\crefname{figure}{Figure}{Figures}
\crefname{corollary}{Corollary}{Corollaries} 
\newcommand\abs[1]{\lvert#1\rvert}
\renewcommand{\epsilon}{\varepsilon}  
\newcommand{\Beta}{\mathcal{B}}
\DeclareMathOperator{\ord}{ord}
\DeclareMathOperator{\tw}{tw}
\title{Bounds on treewidth via excluding disjoint unions of cycles} 
\date{}
\DeclareRobustCommand{\authorthing}{
    \begin{center}
	    Meike Hatzel\thanks{Department of Mathematics, Technical University Darmstadt, Germany, \href{mailto:research@meikehatzel.com}{research@meikehatzel.com}.
        Meike Hatzel's research was partly supported by the Federal Ministry of Education and Research (BMBF), by a fellowship within the IFI programme of the German Academic Exchange Service (DAAD) and by the Institute for Basic Science (IBS-R029-C1).} \hspace{1cm}
        Chun-Hung Liu\thanks{Department of Mathematics, Texas A\&M University, USA. \href{mailto:chliu@tamu.edu}{chliu@tamu.edu}. Partially supported by NSF under CAREER award DMS-2144042.} \hspace{1cm}
        Bruce Reed\thanks{Mathematical Institute, Academica Sinica, Taiwan, \href{mailto:bruce.al.reed@gmail.com}{bruce.al.reed@gmail.com}. Supported by  NSTC Grant 112-2115-M-001-013 -MY3.} \hspace{1cm}
        Sebastian Wiederrecht\thanks{School of Computing, KAIST, South Korea, \href{mailto:wiederrecht@kaist.ac.kr}{wiederrecht@kaist.ac.kr}.}
\end{center}}
\author{\authorthing}
\begin{document}
\maketitle

\begin{abstract} 
    One of the fundamental results in graph minor theory is that for every planar graph~$H$,
    there is a minimum integer~$f(H)$ such that graphs with no minor isomorphic to~$H$ have treewidth at most~$f(H)$. 
    The best bound known for an arbitrary planar $H$ is ${O(\abs{V(H)}^9\operatorname{poly~log}\abs{V(H)})}$.
    We show that if $H$ is the disjoint union of cycles, then $f(H)$ is $O(|V(H)|\log^2 |V(H)|)$, which is a $\log|V(H)|$ factor away from being optimal. 
\end{abstract}

 \section{Introduction}

Treewidth measures the similarity of a given graph to a tree.
It is of both algorithmic and structural importance and was independently discovered multiple times~\cite{treewidth72,treewidth76,treewidth84}.
Many algorithmically hard problems are fixed-parameter tractable with treewidth as the parameter; in particular, Courcelle's theorem~\cite{courcelle} states that every property expressible in monadic second-order logic can be determined in linear time for graphs of bounded treewidth.
More precisely, the running time of those algorithms is of the form $g(w)n^{O(1)}$, where $g$ is a function and $w$ is the treewidth of the input graph.
Hence, obtaining quantitatively good bounds for the maximum treewidth of graphs in specific graph classes is of interest.

Robertson and Seymour~\cite{RS1991} proved that for every planar graph~$H$,
there is an integer $n_H$ such that graphs that do not contain~$H$ as a minor have treewidth at most $n_H$. 
We define $f(H)$ as the smallest $n_H$ for which this is true. 
The best upper bound known on $f(H)$ for arbitrary $H$, obtained by Chuzhoy and Tan~\cite{CT2020}, is~$O(\abs{V(H)}^9 \operatorname{poly~log} \abs{V(H)})$ as a combined consequence of their result and a result of Robertson, Seymour, and Thomas~\cite{RST1994} that every planar~$H$ is a minor of a ${(k \times k)}$-grid for~${k = 2\abs{V(H)}}$. 

It is natural to ask for a better bound on~$f(H)$ for~$H$ in various classes of planar graphs.
A trivial lower bound for $f(H)$ is~${\Omega(\abs{V(H)})}$ since the complete graph on~${\abs{V(H)}-1}$ vertices has treewidth~${\abs{V(H)}-2}$ and does not contain~$H$ as a minor.
Another lower bound comes from counting the number of disjoint cycles.
It is well-known (for example, see~\cite{graphminorsV}) that there are $n$-vertex graphs with girth $\Omega(\log n)$ and treewidth $\Omega(n)$ for infinitely many integers $n$.
Such graphs have $O(n/\log n)$ disjoint cycles and hence cannot contain any graph $H$ with more than $O(n/\log n)$ disjoint cycles as a minor.
This implies that if $H$ contains a spanning subgraph that is a disjoint union of cycles of length $O(1)$, then $f(H)$ is $\Omega(|V(H)| \log |V(H)|)$.
In fact, if $H$ is a disjoint union of cycles of length at most a fixed constant, then $f(H)$ is $\Theta(|V(H)| \log |V(H)|)$, where the upper bound follows from the Erd\H{o}s-P\'{o}sa theorem for cycles of length at least a fixed constant~\cite{EPlongcycles2014,longCycles2017} since every graph that can be made a graph with no cycle of length at least $\ell$, which has treewidth at most $\ell-2$~\cite{birmele2003,FL1989}, by deleting at most $w$ vertices has treewidth at most $w+\ell-2$.

This paper focuses on $f(H)$ for graphs~$H$ that are disjoint unions of cycles without the $O(1)$ upper bound on the length of the cycles.

Fellows and Langston~\cite{FL1989} and Birmele~\cite{birmele2003} showed that if~$H$ is a cycle, then~$f(H)={\abs{V(H)}-2}$.
Mousset, Noever, Škorić, and Weissenberger~\cite{longCycles2017} proved the tight bound of $\Theta(r\ell + r \log{r})$ in the case that all $r$ cycles are of length $\ell$.
Gollin, Hendrey, Oum, and Reed~\cite{GHOR2024} proved that if $H$ is the disjoint union of two cycles, then $f(H)=(1+o(1))|V(H)|$ while if $H$ is the disjoint union of $o(\frac{\sqrt{|V(H)}}{\sqrt{\log |V(H)|}})$ cycles, then $f(H)$ is at most $\frac{3|V(H)|}{2} + o(|V(H)|)$.

\begin{theorem}[\cite{GHOR2024}]
    \label{fewcycles}
    There is an absolute constant~$c$ such that for every~${r \geq 3}$, 
    if~$H$ is the disjoint union of~$r$ cycles, then 
    \[
        {f(H) \leq \frac{3\abs{V(H)}}{2} + cr^2 \log r}.
    \]
    If~$H$ is the disjoint union of two cycles, then 
    \[
        {f(H) < \abs{V(H)} + \frac{9}{2} \lceil \sqrt{4 + \abs{V(H)}} \rceil + 2}.
    \]
\end{theorem}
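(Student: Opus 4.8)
The plan is to recast the statement in terms of disjoint cycles. Write $H=C_{\ell_1}\cup\dots\cup C_{\ell_r}$ with $3\le\ell_1\le\dots\le\ell_r$ and put $n=|V(H)|=\sum_i\ell_i$. A short contraction argument shows that a graph $G$ has $H$ as a minor if and only if $G$ contains pairwise vertex-disjoint cycles $Z_1,\dots,Z_r$ with $|Z_i|\ge\ell_i$: any connected subgraph that contracts onto $C_{\ell_i}$ contains a cycle of length at least $\ell_i$, and conversely a cycle of length at least $\ell_i$ contracts onto $C_{\ell_i}$. Two classical facts do the heavy lifting. First, Birmel\'{e}'s theorem: every graph of treewidth at least $w$ contains a cycle of length at least $w$. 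Second, the Erd\H{o}s--P\'osa theorem for long cycles: a graph with no $k$ pairwise disjoint cycles of length at least $\ell$ has a set of $O(\ell k+k\log k)$ vertices meeting all its cycles of length at least $\ell$. Finally, by repeatedly passing to torsos across separations of bounded order I may assume throughout that $G$ is $3$-connected without essentially changing its treewidth, which makes elementary rerouting on cycles (chords, ears, swapping arcs) available.

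For the general bound I would run a greedy extraction, peeling the cycles off from the longest required length down to the shortest. The core is a trimming lemma: if $G'$ is $3$-connected and $\tw(G')\ge\ell+(\text{error})$, then $G'$ has a cycle $Z$ with $\ell\le|Z|\le\tfrac32\ell+O(1)$. To prove it, start with a cycle of length at least $\tw(G')$ from Birmel\'{e}'s theorem; since $G'$ is connected and the current cycle is a proper subgraph, there is always a chord or a path through the rest of $G'$ that splits it into two shorter cycles whose lengths sum to at least the current length, and by steering this shortcutting process so as never to drop below $\ell$ one lands in the window $[\ell,\tfrac32\ell+O(1)]$. Having found such a $Z_r$ of length at most $\tfrac32\ell_r+O(1)$ and deleted it, the treewidth falls by at most $|Z_r|$; I then re-establish $3$-connectivity of $G'-V(Z_r)$ and recurse with $\ell_{r-1},\dots,\ell_1$. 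Summing the per-step cost $\tfrac32\ell_i+O(1)$ over the $r$ rounds produces the main term $\tfrac32 n$, while the interplay between the $r$ rounds and the $O(\ell k+k\log k)$ overhead of the Erd\H{o}s--P\'osa theorem (needed to be sure enough long cycles survive the deletions) accounts for the additive $cr^2\log r$.

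For the sharper two-cycle bound I would avoid the factor $\tfrac32$ by not peeling greedily. Instead, take a single cycle $Z$ of length at least $n=a+b$ together with two ears --- internally disjoint paths through $G-V(Z)$ with ends on $Z$ --- that are non-crossing along $Z$; the two ears cut $Z$ into two vertex-disjoint cycles, and by placing the four ends of the ears appropriately along $Z$ their lengths can be made at least $a$ and at least $b$. The delicate point is producing two non-crossing ears: if every two ends interleave, then $Z$ together with all its ears is forced into a near-planar configuration of bounded treewidth, so sufficient connectivity excludes this, and quantifying ``sufficient'' as roughly $n+O(\sqrt n)$ yields, after optimising the constants, the bound $n+\tfrac92\lceil\sqrt{4+n}\rceil+2$.

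The step I expect to be the main obstacle is the trimming lemma: forcing the shortcutting process to stop with a cycle whose length is simultaneously at least $\ell$ and at most $\tfrac32\ell+O(1)$, while leaving behind a graph whose treewidth and connectivity are still large enough to continue, is the technical heart, and it is here --- through the Erd\H{o}s--P\'osa theorem for long cycles --- that the $cr^2\log r$ error term is really spent. For $r=2$ the analogous difficulty is guaranteeing that two of the available ears can be chosen non-crossing, which is exactly what turns the additive error from a constant into one of order $\sqrt n$.
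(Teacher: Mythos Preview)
This theorem is not proved in the present paper; it is quoted from \cite{GHOR2024} as prior work that the main result (\cref{maintheorem}) improves upon. There is therefore no proof here to compare your proposal against.

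Assessed on its own, your outline has a plausible shape but leaves the decisive step as an assertion. The trimming lemma is the crux, as you acknowledge, and your sketch does not establish it: splitting a cycle of length $L$ along an ear with $p$ edges produces two cycles of lengths $a+p$ and $b+p$ with $a+b=L$, so to land in the window $[\ell,\tfrac32\ell+O(1)]$ you must simultaneously control where the ear attaches and how long it is, and nothing you have written does this. The phrase ``re-establish $3$-connectivity after deleting $Z_r$'' is also hiding real work: passing to a torso across a small separation keeps treewidth roughly fixed, but you need the piece you retain to have treewidth at least $\tw(G)-|Z_r|-O(1)$, and the clique edges introduced in a torso may create short cycles that sabotage subsequent trimming. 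Finally, it is not clear from your argument where the $cr^2\log r$ term actually arises: you invoke the Erd\H{o}s--P\'osa bound $O(\ell k+k\log k)$ for long cycles, but iterating over $r$ rounds with varying $\ell_i$ does not obviously collapse to $r^2\log r$ rather than, say, $r\sum_i\ell_i=rn$. The two-cycle case is likewise a plan rather than a proof; in particular the claim that all ears pairwise interleaving forces bounded treewidth needs justification, and turning that qualitative statement into the precise additive term $\tfrac92\lceil\sqrt{4+n}\rceil+2$ would require an explicit extremal computation you have not supplied.
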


With the trivial upper bound of ${r\leq \frac{1}{3}|V(H)|}$, \cref{fewcycles} implies that $f(H)$ is $O(|V(H)|^2\log |V(H)|)$ for general unions of cycles $H$.
An improvement to $f(H)=O(|V(H)|^2)$ can be obtained from the aforementioned result of \cite{longCycles2017} by taking $r=\ell=|V(H)|$.
In this paper, we further improve this bound to $O(|V(H)|\log^2 |V(H)|)$; this is a consequence of the following more fine-grained bound that takes into account not only the number $r$ of cycles whose union make up the graph $H$ but also an upper bound $\ell$ on the length of these cycles.

\begin{theorem}
    \label{maintheorem}
    There is an absolute constant $c$ such that if $H$ is the disjoint union of $r$ cycles of length at most $\ell$, then
    \[
    f(H) \le c|V(H)|\log (r+1)  +cr\log r \log \ell.
    \] 
\end{theorem}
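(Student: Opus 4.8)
\emph{Proof outline.}
First I would replace the minor condition by a packing condition. Let $\ell_1 \ge \ell_2 \ge \cdots \ge \ell_r$ be the lengths of the cycles of $H$, so $\sum_i \ell_i = \abs{V(H)}$ and $\ell_1 \le \ell$. A graph $G$ has $H$ as a minor if and only if $G$ contains $r$ pairwise vertex-disjoint cycles whose lengths, listed in non-increasing order, dominate $(\ell_1,\dots,\ell_r)$ entrywise: given such cycles one contracts the $i$-th longest down to length $\ell_i$ to obtain a model of $H$, and conversely the branch sets of a model of $H$ yield $r$ disjoint cycles, the one for the $i$-th cycle of $H$ having length at least $\ell_i$, and such a family witnesses the condition. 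So it suffices to show that if $\tw(G) \ge c\abs{V(H)}\log(r+1)+cr\log r\log\ell$ then $G$ contains such a dominating family of $r$ disjoint cycles.

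Two tools drive the argument. The first is Birmel\'e's inequality~\cite{birmele2003}: a graph with no cycle of length at least $t$ has treewidth at most $t-2$. The second is the Erd\H{o}s--P\'osa theorem for long cycles~\cite{EPlongcycles2014}: there is a function $p$ so that every graph either contains $k$ pairwise vertex-disjoint cycles each of length at least $t$, or has a set $X$ of at most $p(k,t)$ vertices meeting every cycle of length at least $t$. Combining them gives the lever I would use repeatedly: if $G$ has no $k$ disjoint cycles of length at least $t$ then, for such a hitting set $X$, the graph $G-X$ has no cycle of length at least $t$, so $\tw(G)\le\abs{X}+\tw(G-X)\le p(k,t)+t-2$; equivalently, $\tw(G)\ge p(k,t)+t-1$ forces $k$ disjoint cycles of length at least $t$ in $G$.

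Applying this lever once with all cycles as long as $\ell_1$ would only give $f(H)=\bigO(p(r,\ell)+\ell)$, which is far too weak when the $\ell_i$ vary a lot. So I would instead group the cycles of $H$ by length into dyadic classes $\mathcal{C}_q,\dots,\mathcal{C}_1$, with $\mathcal{C}_j$ consisting of the cycles of $H$ whose length lies in $(2^{j-1},2^j]$; then there are at most $\lceil\log_2\ell\rceil$ classes, and writing $r_j=\abs{\mathcal{C}_j}$ we have $\sum_j r_j=r$ and $\sum_j r_j 2^j=\Theta(\abs{V(H)})$. Processing the classes from longest to shortest, the goal is, for each $j$, to select $r_j$ disjoint cycles of length at least $2^j$ that also avoid all cycles already selected for the longer classes; running the lever in $G$ minus the previously selected vertices accomplishes this as long as $\tw(G)$ exceeds, for every $j$, the number of vertices used so far plus $p(r_j,2^j)+2^j-1$. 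Summing these requirements and using $\sum_j r_j 2^j=\Theta(\abs{V(H)})$, $\max_j r_j\le r$, and that there are $\bigO(\log\ell)$ classes, one arrives at a bound of the stated shape, the first term reflecting the total length $\Theta(\abs{V(H)})$ of the target cycles together with the dependence of $p$ on the number of cycles, and the second the Erd\H{o}s--P\'osa hitting-set corrections, one per dyadic scale.

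The step I expect to fight with is the phrase ``$G$ minus the previously selected vertices'': the long cycles produced by the Erd\H{o}s--P\'osa theorem can be arbitrarily long --- a shortest cycle of length at least $t$ need only be induced, not short --- so deleting them naively may wipe out all of the treewidth rather than a controlled amount. Making the bookkeeping work therefore requires either (i) strengthening the Erd\H{o}s--P\'osa step at each scale $j$ so that the witnessing family of $r_j$ long cycles has total length $\bigO(r_j 2^j)$, i.e.\ the cycles are not much longer than $2^j$, so that the vertices used through class $j$ number only $\bigO(\abs{V(H)})$; or (ii) not extracting cycles at all, but running the dyadic analysis directly on a single tree decomposition or DFS tree of $G$ in the spirit of Birmel\'e's proof and of the arguments of Gollin, Hendrey, Oum and Reed~\cite{GHOR2024} behind \cref{fewcycles}, so that each of the $\bigO(\log\ell)$ scales costs only its hitting-set term. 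Threading this through without losing an extra $\log\ell$ on the $\abs{V(H)}$ term or an extra factor $r$ on the $r\log r\log\ell$ term is where the bulk of the work lies.
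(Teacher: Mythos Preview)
Your outline identifies the right high-level shape (an induction/iteration that peels off the longest remaining cycles and pays a logarithmic number of rounds), and you correctly locate the crux: controlling the cost of the cycles you extract. But the proposal stops precisely at that crux, and neither of your two suggested fixes closes the gap.

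Option~(i) is false as stated. Take an $n\times n$ grid and subdivide every edge $M$ times. The treewidth is still $n$, but every cycle has length at least $4M$. So for any target length $t<4M$ there is no family of $k$ disjoint cycles of length at least $t$ whose total length is $O(kt)$; every cycle you pick costs at least $4M$ vertices, and $M$ is arbitrary. Hence you cannot strengthen the Erd\H{o}s--P\'osa step to produce ``not much longer than $2^j$'' witnesses in general, and the vertex-deletion bookkeeping you propose cannot be made to work. Option~(ii) is a direction rather than an argument; nothing in the proposal indicates how a DFS/tree-decomposition analysis would avoid the same obstacle.

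The paper's proof resolves exactly this difficulty by changing the currency: it never tracks how many \emph{vertices} a deleted cycle has, but rather the \emph{order of the subbramble} it meets, for a fixed maximum-order bramble $\Beta$. The key lemma (their \cref{small_hit}) says $\tw(G-X)\ge\tw(G)-\ord(\Beta_X)$, so a cycle that is arbitrarily long but hits few bramble elements is cheap to delete. (In the subdivided grid, a face cycle has $4M$ vertices but bramble order $O(1)$.) The cycles are then manufactured to have small bramble cost: from $\Beta$ one extracts two paths $P_1,P_2$ with $\ord(\Beta_{V(P_i)})$ prescribed, linked by many internally disjoint paths $Q_j$ (their \cref{lem:pathpartition}); any cycle built from pieces of $P_1\cup P_2$ plus a few short $Q_j$'s has bramble order bounded by $\ord(\Beta_{V(P_1)})+\ord(\Beta_{V(P_2)})+\sum\abs{V(Q_j)}$, independent of its actual length. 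The induction then proceeds on the longest cycle length $\ell_1$: either one such cheap long cycle exists and is removed (Case~1), or the $Q_j$'s themselves are long enough to yield all $r$ cycles at once (Case~2.1), or one extracts a batch of $b$ long cycles with total bramble cost $O(r\log r)$ and recurses with the new maximum cycle length $\ell_{b+1}\le 2^{-3/4}\ell_1$ (Case~2.2), which is where the $\log\ell$ factor appears. The dyadic flavour you anticipated is present in Case~2.2, but the accounting is in bramble order throughout, and that substitution is the missing idea in your outline.
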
 

Note that $\ell$ can be dependent on $|V(H)|$.
If $H$ is a disjoint union of cycles of the same length, then $r=\frac{|V(H)|}{\ell}$ and \cref{maintheorem} give \(f(H) \leq c|V(H)|\log (r+1)  +c|V(H)| \log r \cdot \frac{\log\ell}{\ell} = O(|V(H)| \log r)\).
When $\ell=O(1)$, \cref{maintheorem} recovers the bound $O(|V(H)|\log|V(H)|)$ stated above.

\section{Preliminaries}

All graphs in this paper are simple and finite, and all logarithms are considered base 2.
In proving our result, we focus on a concept dual to treewidth: the \emph{bramble number}. 
A \emph{bramble}~$\Beta$ in a graph $G$ is a family of connected subgraphs of $G$, every two of which intersect or are joined by an edge. 
A \emph{hitting set} for a bramble is a set of vertices intersecting all of its elements. 
The \emph{order} of a bramble~$\Beta$, denoted~$\ord(\Beta)$, is the minimum size of a hitting set for~$\Beta$. 
The \emph{bramble number of~$G$} is the largest order of a bramble in~$G$.
Any subset of a bramble~$\Beta$ is a bramble, called a \emph{subbramble} of~$\Beta$. 
Seymour and Thomas~\cite{ST1993} showed the following, see \cite{Reed1997}. 

\begin{theorem}[\cite{ST1993}]\label{duality}
    The treewidth of a graph is exactly one less than its bramble number. 
\end{theorem}

We observe that for every vertex $v$ in $G$, the treewidth of~$G-v$ is at least the treewidth of~$G$ minus 1 because given a bramble of order~$b$ in~$G$, deleting the bramble elements containing $v$ yields a bramble of order at least $b-1$ in $G-v$. 
More generally, for every subset $X$ of $V(G)$
and maximum order bramble $\Beta$ of $G$ if the subbramble of $\Beta$ consisting of its 
elements which intersect $X$ has order $a$, then the treewidth of $G$ is at most $a$ more than the treewidth of $G-X$. 

For a bramble $\Beta$ in a graph $G$ and a subset $X$ of $V(G)$, we define $\Beta_X$ as the set consisting of the elements of $\Beta$ intersecting $X$.
Hence, the above observation implies the following lemma.

\begin{lemma} \label{small_hit}
Let $G$ be a graph.
Let $\Beta$ be a bramble in $G$ of maximum order.
If $X \subseteq V(G)$ such that $\Beta_X$ has order at most $k$, then $\tw(G) \leq \tw(G-X)+k$.
\end{lemma}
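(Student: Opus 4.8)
The plan is to use the Seymour--Thomas duality (\cref{duality}) together with the simple operation of discarding the bramble elements that meet $X$. Concretely, let $\Beta$ be a bramble in $G$ of maximum order, so that $\ord(\Beta)=\tw(G)+1$. Set $\Beta' \coloneqq \Beta \setminus \Beta_X$, i.e.\ the family of those elements of $\Beta$ that avoid $X$ entirely. First I would check that $\Beta'$ is a bramble in $G-X$: each member of $\Beta'$ is a connected subgraph of $G$ all of whose vertices lie outside $X$, hence it is a connected subgraph of $G-X$; and any two members of $\Beta'$ still intersect or are joined by an edge, that edge also surviving in $G-X$. So $\Beta'$ is indeed a bramble in $G-X$, and by \cref{duality} we have $\tw(G-X) \ge \ord(\Beta')-1$.

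The key step is to lower bound $\ord(\Beta')$. Let $S$ be a minimum hitting set for $\Beta'$ in $G-X$, so $\abs{S}=\ord(\Beta')$, and let $T$ be a minimum hitting set for $\Beta_X$ in $G$, so $\abs{T}=\ord(\Beta_X)\le k$ by hypothesis. Then $S\cup T$ hits every element of $\Beta$: an element of $\Beta$ either meets $X$, in which case it lies in $\Beta_X$ and is hit by $T$, or it avoids $X$, in which case it lies in $\Beta'$ and is hit by $S$. Hence $\abs{S\cup T}\ge \ord(\Beta)=\tw(G)+1$, which gives
\[
  \ord(\Beta') = \abs{S} \ge \abs{S\cup T} - \abs{T} \ge \tw(G)+1-k.
\]

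Combining this with $\tw(G-X)\ge \ord(\Beta')-1$ yields $\tw(G-X)\ge \tw(G)-k$, that is, $\tw(G)\le \tw(G-X)+k$, as desired. I do not expect a genuine obstacle here: the argument is exactly the ``more general'' observation sketched just before the lemma statement, made precise. The only points that require a moment's care are verifying that members of $\Beta'$ really are subgraphs of $G-X$ (so that $\Beta'$ is a bramble \emph{in} $G-X$, not merely in $G$) and noting that $T$ is allowed to use vertices outside $X$ --- neither of which causes any trouble in the union-of-hitting-sets step.
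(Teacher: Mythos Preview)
Your proof is correct and is exactly the argument the paper intends: it is the precise formalisation of the observation stated immediately before the lemma (discard the elements of $\Beta$ meeting $X$ to obtain a bramble in $G-X$, and combine a minimum hitting set for it with one for $\Beta_X$). There is nothing to add.
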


We also need results about brambles hit by cycles (\cref{cyclebramble}) and paths (\cref{lem:pathpartition}).  

\begin{lemma}[{\cite[Theorem 2.4]{BBR2007a}}]
    \label{cyclebramble}
    Let~$G$ be a graph having a bramble~$\Beta$ of order at least three. 
    Then, there is a cycle~$C$ meeting every element of~$\Beta$.
\end{lemma}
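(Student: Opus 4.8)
The plan is to produce the cycle in two moves: first find a single path $P$ that meets \emph{every} element of $\Beta$, and then close $P$ into a cycle using two bramble elements ``pinned'' to its two ends.

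For the first move, since $G$ is finite I would take $P$ to meet as many elements of $\Beta$ as possible and, subject to that, to be as short as possible; note $P$ meets at least one element because the elements of $\Beta$ are nonempty. If some $B_0 \in \Beta$ were missed by $P$, then the minimality of the length would force an element $B_1 \in \Beta$ meeting $V(P)$ only in an endpoint $v_1$ of $P$ (otherwise $P$ minus $v_1$ is a shorter path meeting at least as many elements; when $P$ is a single vertex this is immediate). Since $B_1$ and $B_0$ are connected and touch, $G[V(B_1) \cup V(B_0)]$ is connected and meets $V(P)$ only in $v_1$, so it contains a path $S$ from $v_1$ to a vertex of $B_0$ that is internally disjoint from $P$; gluing $S$ onto $P$ at $v_1$ yields a path meeting every element that $P$ meets together with $B_0$, contradicting the choice of $P$. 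Hence $P$ meets all of $\Beta$.

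For the second move, $V(P)$ is a hitting set for $\Beta$, so $P$ has at least $\ord(\Beta) \ge 3$ vertices; write $P = v_1 \cdots v_m$ with $m \ge 3$, so $v_1 \ne v_m$. Applying the same minimality argument at both ends yields $B_1, B_m \in \Beta$ meeting $V(P)$ in exactly $\{v_1\}$ and exactly $\{v_m\}$, and $B_1 \ne B_m$. As $B_1$ and $B_m$ touch, $G[V(B_1) \cup V(B_m)]$ is connected and meets $V(P)$ only in $\{v_1, v_m\}$, hence contains a $v_1$--$v_m$ path $Q$ internally disjoint from $P$. Since $m \ge 3$, the union $C := P \cup Q$ is an honest cycle of $G$ (even if $Q$ is a single edge), and $C \supseteq P$ meets every element of $\Beta$, as required.

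I expect the main obstacle to be the first move. The only structural input is that bramble elements pairwise touch, and converting this local fact into a \emph{global} hitting path hinges on exactly the right extremal choice of $P$ -- maximise the number of elements met, then minimise length -- so that any missed element, together with an element anchored at an endpoint, provides a connected ``handle'' through which $P$ can be lengthened. Once such a path is in hand the rest is routine: minimality recovers the endpoint witnesses, and the hypothesis $\ord(\Beta) \ge 3$ is used precisely to guarantee that the closing arc $Q$ produces a genuine cycle rather than a doubled edge.
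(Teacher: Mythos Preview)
The paper does not actually prove this lemma: it is quoted as \cite[Theorem~2.4]{BBR2007a} and used as a black box. So there is no ``paper's own proof'' to compare against.

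That said, your argument is correct and is essentially the standard proof of this fact. The extremal choice of $P$ (maximise elements hit, then minimise length) cleanly yields both the existence of a hitting path and the endpoint-anchored elements $B_1,B_m$ needed to close it; the hypothesis $\ord(\Beta)\ge 3$ is used exactly where you say, to ensure $|V(P)|\ge 3$ so that $P\cup Q$ is a genuine cycle. One cosmetic remark: the existence of $B_1$ meeting $V(P)$ only in $v_1$ follows from length-minimality alone and does not require the hypothetical missed element $B_0$, so you could state that once and reuse it at both ends, but this does not affect correctness.
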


\begin{lemma}
    \label{menger}
    Let~$G$ be a graph with a bramble~$\Beta$. 
    Let~$S$ and~$T$ be two subsets of $V(G)$ such that $\Beta_S$ and $\Beta_T$ have order at least~$\ell$.
    Then there are~$\ell$ disjoint paths in $G$ from~$S$ to~$T$.
\end{lemma}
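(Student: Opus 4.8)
The plan is to prove the contrapositive using Menger's theorem: I will assume that $G$ has no $\ell$ pairwise vertex-disjoint paths from $S$ to $T$, extract a small $S$–$T$ separator, and use it to contradict the assumption that both $\Beta_S$ and $\Beta_T$ have order at least $\ell$.

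By the vertex version of Menger's theorem for pairs of vertex sets, if there are fewer than $\ell$ pairwise disjoint paths from $S$ to $T$, then there is a set $Z \subseteq V(G)$ with $\abs{Z} \le \ell - 1$ such that $G - Z$ contains no path with one end in $S$ and the other in $T$. In particular $S \cap T \subseteq Z$, no component of $G - Z$ contains both a vertex of $S$ and a vertex of $T$, and there is no edge of $G$ joining a component of $G - Z$ that meets $S$ to a component that meets $T$. Let $C_S$ be the union of the vertex sets of those components of $G - Z$ that contain a vertex of $S$, and define $C_T$ analogously for $T$; then $C_S$ and $C_T$ are disjoint and $G$ has no edge between them.

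Since $\abs{Z} < \ell \le \ord(\Beta_S)$, the set $Z$ is not a hitting set for $\Beta_S$, so some element $B_S \in \Beta_S$ satisfies $V(B_S) \cap Z = \emptyset$. As $B_S$ is connected, disjoint from $Z$, and contains a vertex of $S$ (which therefore lies in $S \setminus Z \subseteq C_S$), the whole of $B_S$ lies in $C_S$. Symmetrically, using $\ord(\Beta_T) \ge \ell$, there is $B_T \in \Beta_T$ with $V(B_T) \subseteq C_T$. But $B_S$ and $B_T$ are both elements of the bramble $\Beta$, so they intersect or are joined by an edge of $G$ — impossible, since $C_S$ and $C_T$ are disjoint with no edge between them. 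This contradiction yields the $\ell$ disjoint $S$–$T$ paths.

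The argument is genuinely short, so there is no substantial obstacle; the only point demanding a little care is invoking Menger's theorem in the form appropriate for two vertex sets (so that vertices lying in $S \cap T$, or in $S$ or $T$ but inside the separator, are handled correctly), which is entirely standard.
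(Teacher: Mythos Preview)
Your proof is correct and follows essentially the same approach as the paper: assume no $\ell$ disjoint $S$--$T$ paths, take a small separator via Menger's theorem, and derive a contradiction from the bramble's touching property. The only cosmetic difference is that the paper picks a single bramble element avoiding the separator and argues that the separator then hits all of $\Beta_S$ (or $\Beta_T$), whereas you pick one uncovered element from each of $\Beta_S$ and $\Beta_T$ and observe they cannot touch; both variants are equally valid and equally short.
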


\begin{proof}
    Suppose towards a contradiction that the desired $\ell$ disjoint paths between $S$ and $T$ do not exist.  
    By Menger's Theorem, there is a cutset~$X$ of size less than~$\ell$ separating~$S$ from~$T$.
    Now, there exists an element~$B$ of~$\Beta$ disjoint from~$X$ because the order of~$\Beta$ is at least~$\ell$. 
    Since~$G[B]$ is connected, one of $S$ or $T$ does not intersect the component of~${G-X}$ containing~$B$. 
    By symmetry, we may assume that~$S$ does not intersect the component of~${G-X}$ containing~$B$. 
    Since all elements of~$\Beta_S$ either intersect~$B$ or are joined by an edge to~$B$, they all intersect~$X$. 
    But then~$X$ is a hitting set for~$\Beta_S$ and therefore~$\Beta_S$ has order at most~${\abs{X} < \ell}$, which is a contradiction. 
\end{proof}

Combining these two lemmas, we obtain the following result, which is similar to the $\ell=2$ case of \cite[Lemma 3.2]{REED2012}:

\begin{lemma}
    \label{lem:pathpartition}
    Let~$t \geq 2$ be a positive integer and $\Beta$ a bramble of order at least $2t$ in a graph~$G$.
    Then there exist
    \begin{enumerate}
        \item two disjoint paths $P_1$ and $P_2$ in $G$ such that both $\Beta_{V(P_1)}$ and $\Beta_{V(P_2)}$ have order exactly $t$, and 
        \item disjoint paths $Q_1, \dots , Q_t$ each of which has an endpoint on each $P_i$ and is internally disjoint from $P_1 \cup P_2$. 
    \end{enumerate}
\end{lemma}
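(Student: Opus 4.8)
The plan is to obtain the two paths first using \cref{cyclebramble}, then extract the linkage between them using \cref{menger}, taking care that after carving off the paths we still have enough bramble order left to run Menger's theorem.

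\medskip

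\noindent\textbf{Constructing $P_1$ and $P_2$.}
By \cref{cyclebramble}, since $\Beta$ has order at least $2t+1\geq 3$, there is a cycle $C$ meeting every element of $\Beta$; in particular $\Beta_{V(C)}=\Beta$, so $\Beta_{V(C)}$ has order $2t+1$. Now I would walk along $C$ and cut it into two arcs. Formally, order the vertices of $C$ cyclically as $v_1,\dots,v_m$ and for each prefix consider $\Beta_{\{v_1,\dots,v_i\}}$; its order is a nondecreasing function of $i$ that jumps by at most $1$ at each step (adding one vertex can only enlarge a hitting set by one), starts at $0$ (or $1$) and ends at $2t+1$. Hence there is an index $i$ where the order is exactly $t$. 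Let $P_1$ be the sub-path $v_1\dots v_i$ of $C$, so $\Beta_{V(P_1)}$ has order exactly $t$. For the complementary arc $P_2'=v_{i+1}\dots v_m$ we have $V(P_1)\cup V(P_2')=V(C)$, so $\Beta_{V(P_1)}\cup\Beta_{V(P_2')}\supseteq\Beta$, which forces $\Beta_{V(P_2')}$ to have order at least $2t+1-t=t+1$. Now repeat the prefix argument inside $P_2'$: slide along it until $\Beta$-of-the-prefix first reaches order exactly $t$, and let $P_2$ be that prefix sub-path; this is possible because the order along $P_2'$ runs from $0$ up to at least $t+1>t$ in unit steps. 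This gives disjoint sub-paths $P_1,P_2$ of $C$ with both $\Beta_{V(P_1)}$ and $\Beta_{V(P_2)}$ of order exactly $t$, establishing part~(1).

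\medskip

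\noindent\textbf{Constructing the linkage $Q_1,\dots,Q_t$.}
Apply \cref{menger} with $S=V(P_1)$ and $T=V(P_2)$: both $\Beta_S$ and $\Beta_T$ have order $t$, so there are $t$ pairwise disjoint paths in $G$ from $V(P_1)$ to $V(P_2)$. Truncate each such path to its first passage between the two: let $Q_j$ be the subpath from the last vertex on $P_1$ to the first subsequent vertex on $P_2$. Then each $Q_j$ has exactly one endpoint on $P_1$, exactly one endpoint on $P_2$, and its interior meets neither $P_1$ nor $P_2$; the $Q_j$ remain pairwise disjoint since they are subpaths of disjoint paths. This is part~(2).

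\medskip

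\noindent\textbf{Main obstacle.}
The delicate point is bookkeeping with the bramble orders: I need $\Beta_{V(P_1)}$ and $\Beta_{V(P_2)}$ to be \emph{exactly} $t$ (not merely at least $t$), and I need enough ``slack'' so the second prefix argument can actually reach $t$. This is why the hypothesis is order $2t+1$ rather than $2t$: cutting the cycle into the $P_1$-arc of order exactly $t$ leaves the other arc with order at least $t+1$, which is what guarantees the sliding argument on $P_2'$ passes through the value $t$. One should double-check the ``unit step'' claim — that $\ord(\Beta_{X\cup\{v\}})\le\ord(\Beta_X)+1$, which holds because a hitting set for $\Beta_X$ together with $v$ hits every element of $\Beta_{X\cup\{v\}}$ — and that this, with $\ord(\Beta_\emptyset)=0$, lets us hit every intermediate value. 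The path-linkage step is then essentially immediate from \cref{menger} plus truncation, so no real difficulty there.
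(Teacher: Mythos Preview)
Your proof is correct and follows essentially the same approach as the paper: obtain a cycle via \cref{cyclebramble}, carve off two subpaths of bramble-order exactly $t$ (you use a prefix/unit-step argument, the paper uses a minimal-subpath formulation---equivalent), then apply \cref{menger} and shorten the resulting paths to make them internally disjoint from $P_1\cup P_2$ (you truncate explicitly, the paper minimizes total length). The only cosmetic slip is writing ``$\Beta_{V(C)}$ has order $2t+1$'' when you mean \emph{at least} $2t+1$, which does not affect the argument.
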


\begin{proof}
    We apply \cref{cyclebramble} to obtain a cycle $C$ intersecting every element of $\Beta$. 
    We let $P_1$ be a subpath of $C$ such that $\Beta_{V(P_1)}$ has order at least $t$ and is minimal with this property.   
    So, $\Beta_{V(P_1)}$ has order exactly $t$, and hence $C-V(P_1)$ is a path such that $\Beta_{V(C-V(P_1))}$ has order at least $t$. 
    We let $P_2$ be a subpath of $C-V(P_1)$ such that $\Beta_{V(P_2)}$ has order at least $t$ and is minimal with this property. 
    So, $\Beta_{V(P_2)}$ has order exactly $t$.
    \cref{menger} implies there are $t$ disjoint paths from $V(P_1)$ to $V(P_2)$. We choose these to minimise their total length so each is internally 
    disjoint from $P_1 \cup P_2$. 
\end{proof}

We will use the paths in \cref{lem:pathpartition} to construct disjoint cycles (\cref{lem:contracting_linkage_between_paths}), which relies on the following famous theorem of Erd\H{o}s and P\'{o}sa~\cite{EP1965}.

\begin{theorem}[\cite{EP1965}]\label{thm:ErdosPosa}
    There is a constant~$c^* \geq 1$ such that, for every positive integer~$k$, every graph contains either $k$ disjoint cycles or a set of vertices of size at most~${c^* k \log k}$ which hits every cycle. 
\end{theorem}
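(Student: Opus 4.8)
The plan is to follow the classical argument of Erdős and Pósa. The first step is a reduction to graphs of minimum degree at least $3$: given a graph $G$ with no $k$ disjoint cycles, I would repeatedly delete vertices of degree at most $1$ and suppress vertices of degree $2$ (deleting the vertex and adding an edge between its two neighbours, now allowing multiple edges), obtaining a (multi)graph $G'$ that is either edgeless or has minimum degree at least $3$. This operation creates no new cycles, so $G'$ still has no $k$ disjoint cycles; moreover, during the reduction no vertex of a cycle of $G$ is ever deleted (a vertex on a cycle always has degree at least $2$), so each cycle of $G$ corresponds to a cycle of $G'$ on a subset of its vertices, and hence any hitting set for $G'$ is also a hitting set for $G$. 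Thus it suffices to bound $\abs{V(G')}$ by $O(k\log k)$ whenever $G'$ has minimum degree at least $3$ and no $k$ disjoint cycles, and then take $V(G')$ itself as the hitting set for $G$.

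The engine for this bound is that a graph with minimum degree at least $3$ on $n$ vertices contains a cycle of length at most $2\log n + 1$: starting a breadth-first search from any vertex, the layers would grow by a factor of at least $2$ at every step as long as the explored part stays acyclic, so within $\lceil \log n\rceil$ layers a non-tree edge must appear, and it closes a cycle of length at most $2\log n + 1$. Using this, I would extract disjoint cycles greedily: take a shortest cycle $C$ in the current minimum-degree-$\geq 3$ graph, record $V(C)$, delete $V(C)$, and re-reduce to minimum degree at least $3$; repeat. If this runs for $k$ iterations the recorded cycles are pairwise disjoint and we are done; otherwise the process terminates with an edgeless graph after $m < k$ iterations, and (again because cycle vertices are never deleted by the re-reductions) the union of the recorded vertex sets is a hitting set of size $\sum_{i=1}^{m} \abs{V(C_i)}$.

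The crux, and the step I expect to be the main obstacle, is showing this sum is $O(k\log k)$. The naive estimate $\abs{V(C_i)} \leq 2\log n_i + 1$ on the $i$-th shortest cycle is too weak when the current number of vertices $n_i$ stays large for many iterations, and one cannot simply dismiss that regime, because a single re-reduction can cascade and delete a large part of the graph (for instance, in $K_{3,t}$ removing one short cycle and cleaning up destroys everything, even though $K_{3,t}$ has no two disjoint cycles). The resolution, which is the technical heart of the original argument, is an amortised analysis showing that at each iteration one either pays only $O(\log k)$ for the removed cycle, or the vertex count drops so steeply that only few such iterations are possible, with vertices of very large degree and densely cyclic pieces handled separately since they force either many disjoint cycles simultaneously or a genuinely small hitting set. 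Carrying out this bookkeeping and thereby pinning down the absolute constant $c^*$ is where the real work lies; the logarithmic girth bound is what produces the $\log k$ factor in the final estimate.
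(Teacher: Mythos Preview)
The paper does not prove this statement at all: \cref{thm:ErdosPosa} is quoted from Erd\H{o}s and P\'{o}sa~\cite{EP1965} and used as a black box (to derive \cref{lem:EPsubcubic} and \cref{lem:contracting_linkage_between_paths}). There is no proof in the paper to compare your proposal against.

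For what it is worth, your outline is the classical Erd\H{o}s--P\'{o}sa strategy and is broadly on the right track, but as you yourself flag, the amortised analysis in the last paragraph is precisely the substance of the proof and you have not actually carried it out---so as it stands this is a plan rather than a proof. One minor wording issue: when you suppress a degree-$2$ vertex that lies on a cycle, that vertex \emph{is} removed from the graph, so the sentence ``no vertex of a cycle of $G$ is ever deleted'' is literally false; what is true (and what you need) is that every cycle of $G$ survives as a cycle of $G'$ whose vertex set is a subset of the original cycle's vertex set, so a hitting set in $G'$ is still a hitting set in $G$.
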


\begin{lemma} \label{lem:EPsubcubic}
Let $G$ be a graph of maximum degree at most three.
Let $k$ be a positive integer.
If $|E(G)| \geq |V(G)|+3c^*k\log k$, where $c^*$ is the constant in \cref{thm:ErdosPosa}, then $G$ contains at least $k$ disjoint cycles.
\end{lemma}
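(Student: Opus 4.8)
The plan is to argue by contradiction, combining Theorem~\ref{thm:ErdosPosa} with the elementary fact that a forest on $n$ vertices has at most $n-1$ edges and the degree bound on $G$.

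First I would assume, for contradiction, that $G$ does not contain $k$ disjoint cycles. Applying Theorem~\ref{thm:ErdosPosa} then yields a set $X \subseteq V(G)$ with $\abs{X} \le c^* k \log k$ that meets every cycle of $G$. In particular $G - X$ is acyclic, so it is a forest and hence $\abs{E(G-X)} \le \abs{V(G-X)} - 1 \le \abs{V(G)} - 1$.

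Next I would account for the edges of $G$ that are not present in $G - X$: each such edge is incident to a vertex of $X$, and since $G$ has maximum degree at most three, the number of edges incident to $X$ is at most $\sum_{v \in X}\deg_G(v) \le 3\abs{X} \le 3c^* k \log k$. Combining the two estimates gives
\[
	\abs{E(G)} \;\le\; \abs{E(G-X)} + 3\abs{X} \;\le\; \abs{V(G)} - 1 + 3c^* k \log k \;<\; \abs{V(G)} + 3c^* k \log k,
\]
which contradicts the hypothesis. Hence $G$ contains at least $k$ disjoint cycles. The one place to be mildly careful is the degenerate case $k = 1$, where $\log k = 0$ and the hypothesis merely says $\abs{E(G)} \ge \abs{V(G)}$; then $X = \varnothing$ suffices and the same computation goes through. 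I do not expect a genuine obstacle here: all of the real content is imported from the Erd\H{o}s--P\'{o}sa theorem (Theorem~\ref{thm:ErdosPosa}), and the remaining work is just the edge count above exploiting that subcubic graphs lose few edges when a small vertex set is deleted.
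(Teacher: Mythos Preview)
Your proof is correct and follows essentially the same approach as the paper: both argue by contradiction, invoke the Erd\H{o}s--P\'{o}sa theorem to get a small vertex hitting set, use the degree bound $\le 3$ to convert this into a bound of $3c^*k\log k$ on lost edges, and then contradict the forest edge-count. The only cosmetic difference is that the paper deletes the (at most $3c^*k\log k$) edges incident to the hitting set and observes the resulting acyclic graph has $\abs{E} \ge \abs{V}$, whereas you delete the hitting vertices themselves and bound $\abs{E(G)}$ directly; the arithmetic is the same.
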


\begin{proof}
Suppose, to the contrary, that $G$ contains at most $k-1$ disjoint cycles.
Then, a set of at most $c^*k\log k$ vertices in $G$ hitting all cycles in $G$ exists by~\cref{thm:ErdosPosa}.
Since $G$ has maximum degree at most three, there is a set $S$ of at most $3c^*k\log k$ edges hitting all cycles in $G$.
So $G-S$ is a graph with $|E(G-S)| \geq |V(G)| = |V(G-S)|$ with no cycle, a contradiction.
\end{proof}

\begin{lemma}
    \label{lem:contracting_linkage_between_paths}
    Let $k$ be a positive integer and $G$ be a graph.
    Let $P_1$ and $P_2$ be disjoint paths in $G$.
    Let $Q_1,\dots,Q_{\ell}$ be disjoint paths in $G$ between $V(P_1)$ and $V(P_2)$ internally disjoint from $V(P_1 \cup P_2)$.
    If $\ell \geq 2+3c^*k\log k$, where $c^*$ is the constant in \cref{thm:ErdosPosa}, then $P_1 \cup P_2 \cup \bigcup_{i=1}^{\ell} Q_i$ contains at least $k$ disjoint cycles, each containing at least two paths in $\{Q_1,\dots Q_\ell\}$.
\end{lemma}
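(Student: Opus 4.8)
The plan is to contract $H := P_1 \cup P_2 \cup \bigcup_{i=1}^{\ell} Q_i$ down to an auxiliary graph $G'$ of maximum degree at most three and then apply \cref{lem:EPsubcubic}. For each $i$ let $a_i$ and $b_i$ be the endpoints of $Q_i$ on $P_1$ and $P_2$ respectively; these $2\ell$ vertices are pairwise distinct because $Q_1,\dots,Q_\ell$ are pairwise disjoint and $P_1,P_2$ are disjoint. I would let $G'$ be the graph with vertex set $\{a_i\}\cup\{b_i\}$ having a path through the $a_i$ in the order in which they occur along $P_1$, a path through the $b_i$ in the order in which they occur along $P_2$, and the $\ell$ edges $a_ib_i$ (one per $Q_i$). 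Equivalently, $G'$ is obtained from $H$ by contracting each $Q_i$ to an edge and contracting each segment of $P_1$ or of $P_2$ between consecutive branch vertices to an edge (discarding an end segment of $P_i$ containing no branch vertex, if any).

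Then I would do the arithmetic. The graph $G'$ has exactly $2\ell$ vertices and exactly $(\ell-1)+(\ell-1)+\ell = 3\ell-2$ edges, and each $a_i$ (resp.\ $b_i$) is incident with at most two path-edges plus the single edge $a_ib_i$, so $G'$ has maximum degree at most three. The hypothesis $\ell \ge 2+3c^*k\log k$ rearranges to $|E(G')| = 3\ell-2 \ge 2\ell + 3c^*k\log k = |V(G')| + 3c^*k\log k$, so \cref{lem:EPsubcubic} yields $k$ pairwise vertex-disjoint cycles $C_1',\dots,C_k'$ in $G'$.

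Finally I would lift each $C_j'$ back to $H$, replacing each edge $a_ib_i$ on it by the path $Q_i$ and each path-edge on it by the corresponding segment of $P_1$ or $P_2$. Two observations finish the argument. First, the set $\{a_1b_1,\dots,a_\ell b_\ell\}$ is an edge cut of $G'$ (deleting it leaves the two disjoint paths), so every cycle of $G'$ uses an even number, hence at least two, of these edges; thus the lift of $C_j'$ contains at least two of the $Q_i$. Second, the lift of $C_j'$ is genuinely a cycle of $H$, and the $k$ lifts are pairwise vertex-disjoint, because $V(H)$ is partitioned into the branch vertices $a_i,b_i$, the interiors of the $Q_i$, and the interiors of the segments of $P_1$ and $P_2$ between consecutive branch vertices, and each such piece corresponds to a single vertex or a single edge of $G'$; so vertex-disjointness of the $C_j'$ in $G'$ transfers to vertex-disjointness of the lifts in $H$.

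The contraction setup and the counting are routine; the one step deserving a careful sentence is the last one — checking that vertex-disjoint cycles of the contracted graph $G'$ lift to vertex-disjoint cycles of $H$ — and this becomes immediate once the above partition of $V(H)$ into pieces indexed by the vertices and edges of $G'$ is recorded.
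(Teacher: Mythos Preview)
Your proof is correct and follows essentially the same approach as the paper: contract to a subcubic auxiliary graph, apply \cref{lem:EPsubcubic}, and lift the cycles back using that the $Q_i$-edges form an edge cut. The only cosmetic difference is that the paper contracts just the $Q_i$'s (keeping $P_1,P_2$ intact) whereas you additionally contract the segments of $P_1$ and $P_2$; since such contractions preserve $|E|-|V|$, the arithmetic and the conclusion are identical.
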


\begin{proof}
    We obtain an auxiliary graph $J$ from $P_1 \cup P_2 \cup \bigcup_{i=1}^{\ell} Q_i$ by contracting each $Q_i$ into a single edge.
    Then $J$ is a graph of maximum degree at most three and $|E(J)| = |E(P_1)|+|E(P_2)|+\ell = |V(J)|-2+\ell  \geq |V(J)| +3c^* k\log k$.
    By~\cref{lem:EPsubcubic}, $J$ contains at least $k$ disjoint cycles.
    Note that each of those cycles contains at least two edges not in $E(P_1 \cup P_2)$.
    By replacing the contracted edges with the original paths, this yields at least $k$ disjoint cycles in $P_1 \cup P_2 \cup \bigcup_{i=1}^{\ell} Q_i$, each containing at least two paths in $\{Q_1,\dots Q_\ell\}$.
\end{proof}

\section{The proof of \texorpdfstring{\cref{maintheorem}}{Theorem 1.2}}

We now have everything in place to prove our main theorem.

We proceed by induction, with three different induction steps. 
Which step we apply depends on the length $\ell_1$ of the longest cycle $C_1$ of $H$ and its relationship to the number $r$ of cycles in $H$. 

If there is a cycle $C$ in $G$ of length at least $\ell_1$ such that $\Beta_{V(C)}<6\ell_1$ (Case 1), then we delete it and apply induction on $H-C_1$ in $G-C$.
We note that if we could always apply this step, we would get that $f(H)<6|V(H)|$; but it is not true in general because $f(H)$ is $\Omega(|V(H)|\log|V(H)|)$ as we discussed in the introduction.

Otherwise, if $\ell_1$ is large in terms of $r$ (Case 2.1), then a straightforward argument shows that we can find all the cycles with one application 
of \cref{lem:contracting_linkage_between_paths}.
Indeed, this case could also be handled by applying \cref{fewcycles} from \cite{GHOR2024}.

The remaining case (Case 2.2) is more delicate.
It includes the case that there are more than $\frac{\sqrt{n}}{\log n}$ cycles, and a more nuanced application of \cref{lem:contracting_linkage_between_paths} is required. 

\begin{proof}[Proof of~\cref{maintheorem}]
    We set $c=68c^*+12$, where $c^*$ is the constant in \cref{thm:ErdosPosa}.
    Let $r,h,\ell$ be positive integers.
    Let $H$ be an $h$-vertex graph that is a union of $r$ disjoint cycles of length at most $\ell$.
    We show that every graph $G$  with treewidth at least $ch \log (r+1)+cr\log r \log \ell$ contains $H$ as a minor. 
    We let $\Beta$ be a bramble of maximum order in $G$.
    By \cref{duality}, $\ord(\Beta) \geq 1+ch \log (r+1)+cr\log r \log \ell$.
    
    We proceed by induction on $r$.    
    If $r=1$, then \cref{cyclebramble} implies that there exists a cycle $C$ hitting all elements of $\Beta$, so $|V(C)| \geq \ord(\Beta) \geq h$, and hence $C$ contains $H$ as a minor.
    
    So, we can assume $r \ge 2$. 
    We enumerate the components of $H$ as $C_1,\ldots, C_r$ so that letting $\ell_i=|V(C_i)|$ we have $\ell \ge \ell_1 \ge \ell_2 \geq \ldots \ge \ell_r$.  

\medskip

\noindent \textbf{Case 1:} $G$ contains a cycle $C$ with length at least $\ell_1$ such that the order of $\Beta_{V(C)}$ is at most $6\ell_1$. 
\smallskip

\noindent\textsl{Proof of Case 1:}
 By \cref{small_hit} we have, $\tw(G-V(C)) \geq \tw(G) - 6\ell_1$.
    Since $c \log r \geq 6$,
    \[
    \tw(G) - 6\ell_1 \geq ch \log r+cr\log r \log \ell - 6\ell_1 \geq c(h-\ell_1) \log r+c(r-1) \log (r-1) \log \ell.
    \]
    So, by the induction hypothesis, $H-V(C_1)$ is a minor of $G-V(C)$.
    Since the length of $C$ is at least $\ell_1$, $C$ contains $C_1$ as a minor.
    So $H$ is a minor of $G$.\hfill$\blacksquare$

\medskip

\noindent \textbf{Case 2:} $G$ contains no cycle $C$ of length at least $\ell_1$ such that the order of $\Beta_{V(C)}$ is at most $6\ell_1$. 
\smallskip

We break \textbf{Case 2} into two subcases depending on the relation between $\ell_1$ and $r$.
\smallskip

\noindent \textbf{Case 2.1: $\ell_1 \ge 1+3c^* r\log r$.}   
\smallskip

\noindent\textsl{Proof of Case 2.1:}
Note that $ch\log r \geq 4\ell_1$.
Apply \cref{lem:pathpartition} with $t=2\ell_1$ to obtain the paths $P_1,P_2,Q_1, \dots , Q_{2\ell_1}$. 
If at least $\ell_1$ of those $Q_i$'s have $|V(Q_i)| \leq \ell_1$, then there are $1 \leq i_1<i_2 \leq 2\ell_1$ such that the subpath $P_1'$ of $P_1$ between the endpoints of $Q_{i_1}$ and $Q_{i_2}$ has length at least $\ell_1-1$.
So, the union $P_1' \cup P_2 \cup Q_{i_1} \cup Q_{i_2}$ contains a cycle $C$ containing $P_1'$ (which has length at least $\ell_1$) with $\ord(\Beta_{V(C)}) \leq |V(Q_1)|+|V(Q_2)|+ \ord(\Beta_{V(P_1)})+\ord(\Beta_{V(P_2)}) \leq 6\ell_1$, contradicting the assumption for \textbf{Case 2}.

So at least $\ell_1+1$ paths among $Q_1, \dots , Q_{2\ell_1}$ have at least $\ell_1+1$ vertices.
By symmetry, we may assume that $Q_1, \dots Q_{\ell_1+1}$ have at least $\ell_1+1$ vertices.
By ~\cref{lem:contracting_linkage_between_paths}, $P_1 \cup P_2 \cup \bigcup_{i=1}^{\ell_1+1} Q_i$ contains at least $r$ disjoint cycles of length at least $\min_{1 \leq i \leq \ell_1}|V(Q_i)| \geq \ell_1$.
So $G$ contains $H$ as a minor.
\hfill$\blacksquare$
\medskip

\noindent\textbf{Case 2.2: $\ell_1 < 1+3c^* r\log r$.}  
\smallskip

\noindent\textsl{Proof of Case 2.2:}
Let $a = \lfloor \frac{cr\log r}{4} \rfloor$.
By \cref{lem:pathpartition}, there exist two disjoint paths $P_1$ and $P_2$ in $G$ such that both $\Beta_{V(P_1)}$ and $\Beta_{V(P_2)}$ have order exactly $a$, and there exist disjoint paths $Q_1, \dots , Q_{a}$ each of which has an endpoint on each $P_i$ and is internally disjoint from $P_1 \cup P_2$. 

If at least $2+3c^* r\log r$ of those $Q_i$'s have $|V(Q_i)| \ge \ell_1$, then there are at least $r$ disjoint cycles of length at least $\ell_1$ contained in the union of $P_1 \cup P_2$ and those $Q_i$'s with $|V(Q_i)| \geq \ell_1$ by \cref{lem:contracting_linkage_between_paths}, which gives an $H$ minor in $G$.

Hence we may assume that there are at most $2+3c^* r\log r$ indices $i\in [a]$ such that $|V(Q_i)| \ge \ell_1$.

Let $b$ be the largest integer $i$ with $1 \leq i \leq \lceil 2^{3/4}\frac{3r}{4\ell_1} \rceil$ such that $\ell_i \geq 2^{-3/4}\ell_1$.
We claim that 
\begin{equation} \label{claim1}
	c \cdot \sum_{i=1}^b \ell_i \cdot \log r + cr\log r \log \ell - cr\log r \log \ell_{b+1} \geq \frac{3cr\log r}{4}.
\end{equation}
By the definition of $b$, we know $\sum_{i=1}^b \ell_i \geq b 2^{\frac{-3}{4}}\ell_1 $, so if $b = \lceil 2^{\frac{3}{4}} \frac{3r}{4\ell_1} \rceil$, then $\sum_{i=1}^b \ell_i \geq b 2^{\frac{-3}{4}} \ell_1 \geq \frac{3r}{4}$ and \cref{claim1} holds.
If $b \leq \lceil 2^{\frac{3}{4}} \frac{3r}{4\ell_1} \rceil -1$, then since $\ell_{b+1} < 2^{\frac{-3}{4}} \ell_1$, we have
\[
cr\log r \log \ell - cr\log r \log \ell_{b+1} \geq  cr\log r \log\frac{\ell_1}{\ell_{b+1}} \geq \frac{3cr\log r}{4}.
\]
This proves \cref{claim1}. 

Recall that there are at most $2+3c^* r\log r$ indices $i\in [a]$ such that $|V(Q_i)| \ge \ell_1$.
So at least $a-(2+3c^* r\log r)$ of those paths $Q_1,...,Q_a$ have less than $\ell_1$ vertices.
Now we claim that $a-(2+3c^* r\log r) > \ell_1 (2+3c^*b\log b)$.
If $\frac{r}{\ell_1} < 0.79$, then $b \leq \lceil 2^{3/4}\frac{3r}{4\ell_1} \rceil \leq 1$, so $\ell_1 (2+3c^*b\log b) \leq 2\ell_1$; if $\frac{r}{\ell_1} \geq 0.79$, then $b \leq \lceil 2^{3/4}\frac{3r}{4\ell_1} \rceil \leq \frac{2.6r}{\ell_1}$, so $\ell_1 (2+3c^*b\log b) \leq 2\ell_1 + 7.8c^* r\log r$.
Since $\ell_1 < 1+3c^* r\log r$, we have $\ell_1 (2+3c^*b\log b) \leq 2\ell_1 + 7.8c^* r\log r < 2 + 13.8c^* r\log r \leq a-(2+3c^* r\log r)$.

Hence, we can choose a set $S$ of $\lceil 2+3c^*b\log b \rceil$ paths such that every path in $S$ equals $Q_i$ for some $i \in [a]$ with $|V(Q_i)| < \ell_1$ and the distance between the endpoints in $P_1$ of any two paths in $S$ is at least $\ell_1$.
By \cref{lem:contracting_linkage_between_paths}, $P_1 \cup P_2 \cup \bigcup_{P \in S}P$ contains $b$ disjoint cycles $F_1,F_2,\dots,F_b$ each containing at least two paths in $S$.
Since the distance between any of the endpoints in $P_1$ of any two distinct paths in $S$ is at least $\ell_1$, each $F_i$ has a length of at least $\ell_1$.
So $\bigcup_{i=1}^bF_i$ contains $\bigcup_{i=1}^bC_i$ as a minor.

Moreover, since each path in $S$ has at most $\ell_1$ vertices,
	\begin{align*}
		\ord(\Beta_{\bigcup_{i=1}^bV(F_i)}) \leq {}& \ord(\Beta_{V(P_1 \cup P_2)}) + |S| \cdot \ell_1 \\
		\leq {}& 2a + (2+3c^*b\log b)\ell_1 + \ell_1 \\
		< {}& 2a + (a-(2+3c^* r\log r)) + (1+3c^* r\log r) \\
		= {}& 3a -1.
    \end{align*}
This implies 
    \begin{align*}
        \ord(\Beta_{\bigcup_{i=1}^bV(F_i)}) \leq 3a \leq \frac{3cr\log r}{4} \leq c \cdot \sum_{i=1}^b \ell_i \cdot \log r + cr\log r \log \ell - cr\log r \log \ell_{b+1}
    \end{align*}
by \cref{claim1}. 
Hence, by \cref{small_hit},
	\begin{align*}
		\tw(G-\bigcup_{i=1}^{b}V(F_i)) \geq {}& \tw(G) - (c \cdot \sum_{i=1}^b \ell_i \cdot \log r + cr\log r \log \ell - cr\log r \log \ell_{b+1}) \\
		\geq {}& c(h-|V(\bigcup_{i=1}^{b}C_i)|) \cdot \log (r+1)+cr\log r \log \ell_{b+1} \\
		= {}& c \cdot |V(\bigcup_{i=b+1}^{r}C_i)| \cdot \log (r+1)+cr\log r \log \ell_{b+1}.
	\end{align*}
Since the longest cycle in $\bigcup_{i=b+1}^{r}C_i$ has length at most $\ell_{b+1}$, the induction hypothesis implies that $G-\bigcup_{i=1}^{b}V(F_i)$ contains $\bigcup_{i=b+1}^rC_i$ as a minor.
Recall that $\bigcup_{i=1}^bF_i$ contains $\bigcup_{i=1}^bC_i$ as a minor.
Hence, $H$ is a minor of $G$.
\end{proof}

\bigskip

\noindent \textbf{Update:}
While a version of this paper was under review, Joret and Micek~\cite{JM} improved \cref{maintheorem} by showing $f(H)=O(|V(H)|+r\log r)$.

\bigskip

\noindent \textbf{Acknowledgement:}
This research was carried out at the first Pacific Rim Graph Theory Workshop held at the Institute of Mathematics of Academia Sinica in Taipei, Taiwan.
The authors thank the Institute for hosting and sponsoring the workshop.

\bibliographystyle{alphaurl}
\bibliography{treewidth}

\end{document}